\definecolor{nicegreen}{RGB}{0,180,0}
\newtheorem{thm}{Theorem}[section]
\newtheorem*{thm*}{Theorem}
\newtheorem{cor}[thm]{Corollary}
\newtheorem{lemma}[thm]{Lemma}
\newtheorem*{propn*}{Proposition}
\theoremstyle{definition}
\newtheorem{rmk}[thm]{Remark}
\newcommand{\triv}{\textnormal{triv}}
\newcommand{\sign}{\textnormal{sign}}
\newcommand{\soc}{\textnormal{soc}}
\newcommand{\Hom}{\textnormal{Hom}}
\newcommand{\End}{\textnormal{End}}
\newcommand{\Ind}{\textnormal{Ind}}
\newcommand{\cind}{\textnormal{c-Ind}}
\newcommand{\Ext}{\textnormal{Ext}}
\newcommand{\ord}{\textnormal{Ord}}
\newcommand*{\longhookrightarrow}{\ensuremath{\lhook\joinrel\relbar\joinrel\rightarrow}}
\newcommand*{\longtwoheadrightarrow}{\ensuremath{\relbar\joinrel\twoheadrightarrow}}
\newcommand{\sm}[4]{\left(\begin{smallmatrix} #1 & #2 \\ #3 & #4 \end{smallmatrix}\right)}
\newcommand{\cC}{{\mathcal{C}}}
\newcommand{\cH}{{\mathcal{H}}}
\newcommand{\cR}{{\mathcal{R}}}
\newcommand{\cT}{{\mathcal{T}}}
\newcommand{\bbC}{{\mathbb{C}}}
\newcommand{\bbF}{{\mathbb{F}}}
\newcommand{\bbQ}{{\mathbb{Q}}}
\newcommand{\bbZ}{{\mathbb{Z}}}
\newcommand{\fJ}{{\mathfrak{J}}}
\begin{document}
\nocite{}

\title{Derived right adjoints of parabolic induction: an example}
\date{}
\author{Karol Kozio\l}
\thanks{The author was supported by NSF Grant DMS-2101836.}
\address{Baruch College, City University of New York, 1 Bernard Baruch Way, New York, NY 10010} \email{karol.koziol@baruch.cuny.edu}
\thanks{We thank Ramla Abdellatif and the anonymous referee for useful comments and discussion.}

\subjclass[2010]{22E50 (primary), 20C08 (secondary)}

\begin{abstract}
Suppose $p \geq 5$ is a prime number, and let $G = \textnormal{SL}_2(\mathbb{Q}_p)$.  We calculate the derived functors $\textnormal{R}^n\mathcal{R}_B^G(\pi)$, where $B$ is a Borel subgroup of $G$, $\mathcal{R}_B^G$ is the right adjoint of smooth parabolic induction constructed by Vign\'eras, and $\pi$ is any smooth, absolutely irreducible, mod $p$ representation of $G$.
\end{abstract}

\maketitle

\section{Introduction}

One of the most fundamental operations in the representation theory of $p$-adic groups is that of \textbf{parabolic induction}: given a $p$-adic connected reductive group $G$, a rational parabolic subgroup $P = MN$, and a smooth representation $\sigma$ of the Levi quotient $M$ of $P$ (over some coefficient field $C$), we can construct the $G$-representation $\Ind_P^G(\sigma)$ induced from (the inflation to $P$ of) $\sigma$.  In this way, we obtain an exact functor $\Ind_P^G$ from the category of smooth $M$-representations over $C$ to the category of smooth $G$-representations over $C$.  The relevance of this functor comes from the fact that any smooth irreducible admissible $G$-representation arises as a subquotient of some $\Ind_P^G(\sigma)$, where $\sigma$ is a so-called supercuspidal representation.

In \cite{vigneras:rightadj}, Vign\'eras shows that the functor $\Ind_P^G$ commutes with small direct sums.  Since the category of smooth $M$-representations is a locally small, Grothendieck abelian category, a version of the Adjoint Functor Theorem implies that $\Ind_P^G$ possesses a (left-exact) right adjoint $\cR_P^G$.  When the characteristic of $C$ is different from $p$, we can identify this functor explicitly: $\cR_P^G$ is naturally isomorphic to the normalized Jacquet module relative to the opposite parabolic subgroup $P^{-} = MN^{-}$.  For $C = \bbC$, this is \cite[Main Thm.]{bernstein:secondadjoint} and \cite[Thm. 3]{bushnell}, and the general $\textnormal{char}(C) \neq p$ case follows from \cite[Cor. 1.3]{DHKM:heckealgs} (see also \cite[Thm. 1.5]{dat} for earlier partial results).

When the characteristic of $C$ is equal to $p$, the functor $\cR_P^G$ is more mysterious.  When restricted to the category of admissible representations, $\cR_P^G$ is isomorphic to Emerton's functor $\ord_{P^-}^G$ of ordinary parts (see \cite[Cor. 4.13]{AHV} for this equivalence).  However, it is not clear how to describe $\cR_P^G$ on the entire category of smooth representations.

There have also been fascinating recent advances in derived aspects of the mod $p$ representation theory of $p$-adic reductive groups, which provide new methods for approaching the mod $p$ Langlands program (see \cite{schneider:dga}, \cite{harris:specs}).  In particular, in the article \cite{scherotzkeschneider} the authors consider the total derived functors $\textnormal{R}\cR_P^G$ over a field of characteristic $p$, and leave open the question of explicitly calculating these derived functors.  The main result of this note is the following, which addresses some of these questions.

\begin{thm*}
Suppose $\pi$ is a smooth, absolutely irreducible representation of $G = \textnormal{SL}_2(\bbQ_p)$ over a field $C$ of characteristic $p \geq 5$, and let $B$ denote the upper triangular Borel subgroup of $G$.  Then, for all $n \geq 0$, we have an isomorphism of smooth admissible $T$-representations
$$\textnormal{R}^n\cR_B^G(\pi) \cong \textnormal{R}^n\ord_{B^-}^G(\pi).$$
\end{thm*}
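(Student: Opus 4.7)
The plan is to use the classification of smooth absolutely irreducible mod $p$ representations of $G = \textnormal{SL}_2(\bbQ_p)$ for $p \geq 5$---namely characters, irreducible principal series, special series, and supercuspidals---together with the fact that each such $\pi$ is admissible. For admissible $\pi$, the $n = 0$ case is precisely the equivalence $\cR_B^G \cong \ord_{B^-}^G$ of \cite{AHV}, so all the content lies in higher degrees. Since the unipotent radical $N$ is one-dimensional and any compact open $N_0 \subseteq N$ has pro-$p$ cohomological dimension one, Emerton's cohomological description of $\textnormal{R}^n \ord_{B^-}^G(\pi)$ in terms of $H^\bullet(N_0,\pi)$ forces $\textnormal{R}^n \ord_{B^-}^G(\pi) = 0$ for $n \geq 2$. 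Hence the nontrivial comparison occurs only in degree $n = 1$, together with a vanishing statement $\textnormal{R}^n \cR_B^G(\pi) = 0$ for $n \geq 2$.

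My strategy is then a case-by-case analysis guided by the classification. If $\pi$ is a character, an irreducible principal series, or a special series, then $\pi$ fits into a short exact sequence whose other terms are a parabolic induction $\textnormal{Ind}_B^G(\chi)$ of a character of $T$ and a $T$-character (or extension thereof). The associated long exact sequences for $\textnormal{R}^\bullet \cR_B^G$ and $\textnormal{R}^\bullet \ord_{B^-}^G$ reduce both sides to computations on principal series, where direct adjunction gives control. For supercuspidal $\pi$ one uses the Hecke-theoretic presentation
\begin{equation*}
0 \longrightarrow \cind_{KZ}^G V \xrightarrow{\;T - \lambda\;} \cind_{KZ}^G V \longrightarrow \pi \longrightarrow 0,
\end{equation*}
with $K = \textnormal{SL}_2(\zp)$, $V$ an irreducible $K$-representation, and $T$ the spherical Hecke operator, and chases the resulting long exact sequences for each functor.

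In every case the comparison is mediated by the natural transformation $\cR_B^G \to \ord_{B^-}^G$, which is an isomorphism on admissibles by \cite{AHV}; this upgrades to a morphism on the derived level via the universal $\delta$-functor property of $\textnormal{R}^\bullet \cR_B^G$, and one checks it is an isomorphism on each irreducible $\pi$ above by matching terms in the two long exact sequences.

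The main obstacle is computing $\cR_B^G$ and its higher derivatives on the compact inductions $\cind_{KZ}^G V$, which are \emph{not} admissible and therefore lie outside the equivalence of \cite{AHV}. These must be attacked directly, using the Iwasawa decomposition and the structure of $B \backslash G / KZ$ to analyze $\cR_B^G$ via Mackey-type arguments, and carefully tracking the resulting $T$-action against the Hecke operator $T - \lambda$ appearing in the supercuspidal presentation. Establishing the vanishing $\textnormal{R}^n \cR_B^G(\cind_{KZ}^G V) = 0$ for $n \geq 2$, together with an explicit degree-one description matching the $\ord_{B^-}^G$-side, is expected to be the technical crux of the argument.
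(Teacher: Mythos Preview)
Your proposal has a genuine gap, and it is precisely the obstacle the paper flags in its introduction. You plan to use ``the natural transformation $\cR_B^G \to \ord_{B^-}^G$, which is an isomorphism on admissibles,'' and then invoke the universal $\delta$-functor property of $\textnormal{R}^\bullet\cR_B^G$. But $\textnormal{R}^n\cR_B^G$ is computed via injective resolutions in $\mathfrak{Rep}^\infty(G)$, while $\textnormal{R}^n\ord_{B^-}^G$ is computed in $\mathfrak{Rep}^{\textnormal{ladm}}(G)$; to run a universality argument you need the underived functors to agree (or at least to admit a natural comparison map) on the injective objects of a common ambient category. Those injectives are never admissible, and it is \emph{not known} whether $\cR_B^G$ agrees with $\ord_{B^-}^G$ on locally admissible representations---the result of \cite{AHV} only gives the identification on admissible ones. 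The paper states this explicitly and does not construct any comparison map; the isomorphism $\textnormal{R}^n\cR_B^G(\pi)\cong\textnormal{R}^n\ord_{B^-}^G(\pi)$ is obtained \emph{a posteriori} by computing each side separately and observing that the answers match.

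The paper's method is accordingly quite different from yours. The key input is a spectral sequence $\textnormal{H}^i(T_1,\textnormal{R}^j\cR_B^G(\pi))\Rightarrow\cR_{\cH_T}^{\cH}(\textnormal{H}^{i+j}(I_1,\pi))$; since $T_1$ has cohomological dimension~$1$, this degenerates and the Hecke side (which is explicitly computable from known descriptions of $\textnormal{H}^n(I_1,\pi)$) controls $(\textnormal{R}^n\cR_B^G(\pi))^{T_1}$. For the Steinberg, trivial, and supersingular cases this already determines $\textnormal{R}^n\cR_B^G(\pi)$ completely---in the supersingular case because each $\textnormal{H}^n(I_1,\pi)$ is a supersingular Hecke module and hence annihilated by $\cR_{\cH_T}^{\cH}$, so no presentation by $\cind_{K}^{G}V$ is ever needed. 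For principal series the Hecke computation only bounds the answer, and one finishes by playing the two Grothendieck spectral sequences $\Ext_T^i(\sigma,\textnormal{R}^j\cR_B^G(\pi))\Rightarrow\Ext_G^{i+j}(\Ind_B^G(\sigma),\pi)$ and $\Ext_T^i(\sigma,\textnormal{H}^j\ord_{B^-}^G(\pi))\Rightarrow\Ext_G^{i+j}(\Ind_B^G(\sigma),\pi)$ against each other, using that they share the same abutment. Your proposed attack on supercuspidals via $\cind_{K}^{G}V$ would require evaluating $\textnormal{R}^n\cR_B^G$ on highly non-admissible representations, for which no techniques are available; the Hecke-algebra route bypasses this entirely.
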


We make some comments regarding this result.  As the category of admissible $G$-representations does not have enough injectives, the derived functors $\textnormal{R}^n\ord_{B^-}^G$ must be computed in the category of \emph{locally} admissible $G$-representations.  However, it is not known whether $\ord_{B^-}^G$ agrees with $\cR_B^G$ on this category, and therefore we cannot construct a direct comparison between $\textnormal{R}^n\cR_B^G$ and $\textnormal{R}^n\ord_{B^-}^G$.  To address this subtlety, we use the results of \cite{koziol:functorial} to relate $\cR_B^G$ to the analogously defined functor on pro-$p$-Iwahori--Hecke modules, where we can do explicit calculations to evaluate $\textnormal{R}^n\cR_B^G(\pi)$ (and deduce \textit{a posteriori} the isomorphism with $\textnormal{R}^n\ord_{B^-}^G(\pi)$).  To our knowledge, the results computing $\textnormal{R}^n\cR_B^G(\pi)$ are the first of their kind.

We are hopeful that some of the techniques for calculating $\textnormal{R}^n\cR_B^G(\pi)$ will generalize to other $p$-adic reductive groups (though some are specific to the group $\textnormal{SL}_2(\bbQ_p)$).

\section{Preparation}

\subsection{}
Suppose $p \geq 5$ is a prime number, and define $G := \textnormal{SL}_2(\bbQ_p)$.  We let $B$ denote the upper triangular Borel subgroup, and $T$ the diagonal maximal torus.  We let $\mathfrak{Rep}^\infty(-)$ denote the category of smooth representations with coefficients in a field $C$ of characteristic $p$.  We will examine the (exact) functor of parabolic induction
$$\textnormal{Ind}_B^G: \mathfrak{Rep}^\infty(T) \longrightarrow \mathfrak{Rep}^\infty(G),$$
and its right adjoint
$$\cR_B^G: \mathfrak{Rep}^\infty(G) \longrightarrow \mathfrak{Rep}^\infty(T),$$
constructed in \cite[\S 4]{vigneras:rightadj}.

We also define two distinguished characters $\overline{\rho}, \overline{\alpha}:T \longrightarrow C^\times$ which we use in the sequel:
$$\overline{\rho}\left(\begin{pmatrix} xp^a & 0 \\ 0 & x^{-1}p^{-a}\end{pmatrix}\right) = \overline{x},\qquad \overline{\alpha}\left(\begin{pmatrix} xp^a & 0 \\ 0 & x^{-1}p^{-a}\end{pmatrix}\right) = \overline{x}^2,$$
where $a \in \bbZ$, $x\in \bbZ_p^\times$, and where $\overline{x} \in \bbF_p^\times \subset C^\times$ denotes the mod $p$ reduction of $x$.

\subsection{}
In order to understand the right derived functors $\textnormal{R}^n\cR_B^G$, we use pro-$p$-Iwahori--Hecke algebras.  Let $I_1$ denote the subgroup of $\textnormal{SL}_2(\bbZ_p)$ which is upper triangular and unipotent modulo $p$, and let $T_1 := T \cap I_1$.  Note that $T_1 \cong 1 + p\bbZ_p \cong \bbZ_p$, so that $T_1$ has cohomological dimension 1.  We let $\cH$ denote the pro-$p$-Iwahori--Hecke algebra of $G$ with respect to $I_1$, and let $\cH_T$ denote the pro-$p$-Iwahori--Hecke algebra of $T$ with respect to $T_1$ (see \cite[\S 4]{vigneras:hecke1} for more details and definitions).  We note that $\cH_T \cong C[T/T_1] \cong C[\bbZ] \otimes_C C[\bbF_p^\times]$, and therefore $\cH_T$ has global dimension 1.

We have analogous functors of parabolic induction
$$\textnormal{Ind}_{\cH_T}^\cH: \mathfrak{Mod-}\cH_T \longrightarrow \mathfrak{Mod-}\cH$$
and its right adjoint
$$\cR_{\cH_T}^\cH: \mathfrak{Mod-}\cH \longrightarrow \mathfrak{Mod-}\cH_T,$$
defined on categories of right modules.  We refer to \cite[\S 4.2]{olliviervigneras} for details and definitions.

\subsection{}

Given a smooth representation $\pi$ of $G$, the space $\pi^{I_1}$ of $I_1$-invariants has a right action of $\cH$, recalled in \cite[pf. of Lem. 4.5]{olliviervigneras}.  Passing to derived functors, the cohomology spaces $\textnormal{H}^i(I_1,\pi)$ also inherit a right action of $\cH$ (described in \cite[\S 2.3]{koziol:functorial}).  We have analogous constructions for $T$ and $\cH_T$.

Given a smooth character $\chi:T\longrightarrow C^\times$, we have $\chi^{T_1} = \chi$, and therefore $\chi$ inherits the structure of a right $\cH_T$-module.  We will use the same notation $\chi$ to denote the $T$-representation and the resulting $\cH_T$-module; the meaning should be clear from context.

\subsection{}
The goal will be to compute $\textnormal{R}^{n}\cR^G_B(\pi)$ where $\pi$ is an absolutely irreducible admissible $G$-representation.  Our main tool will be \cite[Thm. 3.13]{koziol:functorial}: if $\pi$ is an admissible $G$-representation, then we have an $E_2$ spectral sequence of $\cH_T$-modules
$$\textnormal{H}^i\big(T_1, \textnormal{R}^j\cR_B^G(\pi)\big) \Longrightarrow \cR^{\cH}_{\cH_T}\big(\textnormal{H}^{i + j}(I_1, \pi)\big).$$
(Note that the assumption in \cite[\S 3]{koziol:functorial} that $C$ be finite is not required for the construction of the above spectral sequence.)  Since $T_1$ has cohomological dimension 1, the above spectral sequence degenerates at the $E_2$ page to give an isomorphism
\begin{equation}\label{deg1}
\cR_{\cH_T}^{\cH}(\pi^{I_1}) \cong \cR_B^G(\pi)^{T_1}
\end{equation}
and, for $n \geq 1$, an exact sequence
\begin{equation}\label{deg2}
0 \longrightarrow \textnormal{H}^1(T_1, \textnormal{R}^{n - 1}\cR^G_B(\pi)) \longrightarrow \cR_{\cH_T}^{\cH}(\textnormal{H}^n(I_1,\pi)) \longrightarrow (\textnormal{R}^n\cR^G_B(\pi))^{T_1} \longrightarrow 0.
\end{equation}
In particular, this implies the following:
\begin{lemma}
If $\pi$ is an admissible $G$-representation, then each $\textnormal{R}^n\cR_B^G(\pi)$ is an admissible $T$-representation.
\end{lemma}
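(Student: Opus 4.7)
The plan is to reduce admissibility of $V := \textnormal{R}^n\cR_B^G(\pi)$ to finite-dimensionality of $V^{T_1}$, and then to bound the latter using \eqref{deg1} and \eqref{deg2}.

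First I would observe that for any smooth $T$-representation $V$, admissibility is equivalent to $V^{T_1}$ being finite-dimensional over $C$. Since $T_0 = T \cap \textnormal{SL}_2(\bbZ_p)$ is the unique maximal compact open subgroup of $T$ and $T_1 \trianglelefteq T_0$ is an open subgroup of index $p-1$ (prime to $p$), admissibility of $V$ amounts to $V^K$ being finite-dimensional for every open $K \leq T_1$. Pontryagin dualizing, the smooth representation $V$ corresponds to a pseudocompact $C[[T_1]]$-module $V^\vee$ with $(V^{T_1})^\vee \cong V^\vee / X V^\vee$, where $X = \gamma - 1$ for a topological generator $\gamma$ of $T_1 \cong \bbZ_p$. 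Because $C[[T_1]] \cong C[[X]]$ is local Noetherian, Nakayama's lemma for pseudocompact modules shows that $V^\vee$ is finitely generated over $C[[T_1]]$ as soon as $(V^{T_1})^\vee$ is finite-dimensional; and then $V^\vee / I_K V^\vee$ is finite-dimensional for every open $K \leq T_1$ (as $C[[T_1]]/I_K \cong C[T_1/K]$ is finite-dimensional), which dualizes to $V^K$ finite-dimensional.

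With this reduction, it suffices to prove $V^{T_1}$ is finite-dimensional for $V = \textnormal{R}^n\cR_B^G(\pi)$. I would use \eqref{deg1} (for $n = 0$) and \eqref{deg2} (for $n \geq 1$) to identify $V^{T_1}$ as (isomorphic to, respectively a quotient of) $\cR_{\cH_T}^\cH(\textnormal{H}^n(I_1, \pi))$. So it remains to verify two facts: (i) $\textnormal{H}^n(I_1, \pi)$ is finite-dimensional over $C$ for admissible $\pi$, and (ii) the functor $\cR_{\cH_T}^\cH$ sends finite-dimensional $\cH$-modules to finite-dimensional $\cH_T$-modules. Statement (i) is standard: $I_1$ is a torsion-free compact $p$-adic Lie group of dimension $3$, so $C[[I_1]]$ is Noetherian of finite global dimension, admissibility of $\pi$ forces $\pi^\vee$ to be finitely generated over $C[[I_1]]$, and the cohomology is computed by a finite resolution of the trivial module by finitely generated projectives.

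The main obstacle will be (ii). To establish it, I would appeal to the explicit construction of Vign\'eras's right adjoint on the Hecke side from \cite{olliviervigneras} — for instance as an idempotent truncation or as $\Hom$ from a suitably finitely generated bimodule — so as to bound $\dim_C \cR_{\cH_T}^\cH(M)$ in terms of $\dim_C M$. For $G = \textnormal{SL}_2(\bbQ_p)$ the affine pro-$p$-Iwahori--Hecke algebra is very concrete, so this should be tractable. Once (i) and (ii) are in place, the argument is formal.
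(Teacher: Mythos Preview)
Your proposal is correct and follows essentially the same approach as the paper: reduce admissibility to finite-dimensionality of the $T_1$-invariants (the paper cites \cite[Thm.~6.3.2]{paskunas:diags} for this, where you supply the Nakayama argument directly), then use \eqref{deg1}, \eqref{deg2} together with finite-dimensionality of $\textnormal{H}^n(I_1,\pi)$ (the paper cites \cite[Lem.~3.4.4]{emerton:ord2}) and of $\cR_{\cH_T}^{\cH}$ applied to it. For your ``main obstacle'' (ii), the paper simply invokes \cite[\S 4.2, Property~(4)]{olliviervigneras}, which records precisely that $\cR_{\cH_T}^{\cH}$ preserves finite-dimensionality; so the reference you anticipated is exactly the one needed, and no further work is required.
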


\begin{proof}
Assuming $\pi$ is admissible, \cite[Lem. 3.4.4]{emerton:ord2} implies that $\textnormal{H}^n(I_1,\pi)$ is finite-dimensional for all $n\geq 0$.  By \cite[\S 4.2, Property (4)]{olliviervigneras}, the space $\cR_{\cH_T}^{\cH}(\textnormal{H}^n(I_1,\pi))$ is also finite-dimensional, and therefore \cite[Thm. 6.3.2]{paskunas:diags}, along with \eqref{deg1} and \eqref{deg2}, imply that each $\textnormal{R}^n\cR_B^G(\pi)$ is an admissible $T$-representation.
\end{proof}

\subsection{}
In addition to the functor $\cR_B^G$, we will also use Emerton's functors of derived ordinary parts (see \cite{emerton:ord1}, \cite{emerton:ord2} for the relevant definitions; we note that for our applications, the assumption in \textit{op. cit.} that the residue field of $A$ is finite is not necessary).  We recall that the functor $\ord_{B^-}^G$ is the right adjoint to the parabolic induction functor from $\mathfrak{Rep}^{\textnormal{ladm}}(T)$, the category of locally admissible $T$-representations, to $\mathfrak{Rep}^{\textnormal{ladm}}(G)$, the category of locally admissible $G$-representations.

For a locally admissible $T$-representation $\sigma$ and a locally admissible $G$-representation $\pi$, we have two spectral sequences relating extensions and parabolic induction: one in the smooth category
\begin{equation}\label{ord1}
\Ext_T^i(\sigma,\textnormal{R}^j\cR_B^G(\pi)) \Longrightarrow \Ext_G^{i + j}(\Ind_B^G(\sigma),\pi),
\end{equation}
(coming from an application of the Grothendieck spectral sequence) and one in the locally admissible category
\begin{equation}\label{ord2}
\Ext_{T,\textnormal{ladm}}^i(\sigma,\textnormal{R}^j\ord_{B^-}^G(\pi)) \Longrightarrow \Ext_{G,\textnormal{ladm}}^{i + j}(\Ind_B^G(\sigma),\pi)
\end{equation}
(cf. \cite[Eq. (3.7.4)]{emerton:ord2}).  By Corollary \ref{paskunascor} below, we have $\Ext_{H,\textnormal{ladm}}^n(\tau,\pi) \cong \Ext_H^n(\tau,\pi)$ if $\tau$ and $\pi$ are locally admissible $H$-representations, where $H \in \{T, G\}$.  In addition, by \cite[Thm. A.4]{koziol:functorial}, we have $\textnormal{R}^j\ord_{B^-}^G \simeq \textnormal{H}^j\ord_{B^-}^G$ for the group $\textnormal{SL}_2$, and the latter can be explicitly calculated by work of Emerton and Hauseux (\cite{emerton:ord2}, \cite{hauseux}).  Therefore, the spectral sequence \eqref{ord2} becomes
\begin{equation}\label{ord3}
\Ext_{T}^i(\sigma,\textnormal{H}^j\ord_{B^-}^G(\pi)) \Longrightarrow \Ext_{G}^{i + j}(\Ind_B^G(\sigma),\pi).
\end{equation}

\section{Locally admissible representations vs. smooth representations}

In this section, we compare locally admissible and smooth representations.

\begin{lemma}
Let $H \in \{T, G\}$, let $\iota: \mathfrak{Rep}^{\textnormal{ladm}}(H) \longrightarrow \mathfrak{Rep}^\infty(H)$ denote the fully faithful inclusion, and let $\fJ \in \mathfrak{Rep}^{\textnormal{ladm}}(H)$ denote an injective object.  Then $\iota(\fJ) \in \mathfrak{Rep}^\infty(H)$ is also an injective object.  
\end{lemma}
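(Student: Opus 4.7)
The plan is to verify that $\iota(\fJ)$ is injective in $\mathfrak{Rep}^\infty(H)$ via the standard extension-of-morphisms criterion: given a monomorphism $\tau \hookrightarrow \pi$ in $\mathfrak{Rep}^\infty(H)$ and a morphism $f: \tau \to \iota(\fJ)$, I aim to construct an extension $\tilde f : \pi \to \iota(\fJ)$.

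The first observation is that the image $f(\tau) \subseteq \iota(\fJ)$ is locally admissible, since $\iota(\fJ)$ is and the subcategory $\mathfrak{Rep}^{\textnormal{ladm}}(H)$ is closed under subrepresentations. In particular, $f$ factors through the locally admissible quotient $\tau/\ker(f) \hookrightarrow \iota(\fJ)$, and any hypothetical extension $\tilde f$ must factor through a locally admissible quotient of $\pi$. I would then apply Zorn's lemma to the partially ordered set of pairs $(\pi',g')$ with $\tau \subseteq \pi' \subseteq \pi$ and $g' : \pi' \to \iota(\fJ)$ extending $f$, obtaining a maximal element $(\pi^*, g^*)$; it suffices to prove $\pi^* = \pi$.

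Suppose for contradiction there exists $v \in \pi \setminus \pi^*$, and consider the cyclic subrepresentation $Hv \subseteq \pi$. Extending $g^*$ to $\pi^* + Hv$ reduces to extending $g^*|_{\pi^* \cap Hv}$ from $\pi^* \cap Hv$ to $Hv$: the two extensions glue because they agree on the intersection. In the favorable case that $Hv$ is locally admissible, $\pi^* \cap Hv$ is also locally admissible as a subrepresentation of one, and the injectivity of $\fJ$ in $\mathfrak{Rep}^{\textnormal{ladm}}(H)$ produces the desired extension, contradicting the maximality of $(\pi^*, g^*)$.

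The main obstacle is handling vectors $v$ whose cyclic subrepresentation $Hv$ is not locally admissible, since then the injectivity of $\fJ$ inside $\mathfrak{Rep}^{\textnormal{ladm}}(H)$ does not directly apply to the restriction-extension diagram within $Hv$. I expect to resolve this by reformulating the extension problem as the splitting problem for the canonical inclusion $\iota(\fJ) \hookrightarrow \pi \oplus_\tau \iota(\fJ)$ in $\mathfrak{Rep}^\infty(H)$ (where the pushout is computed in the smooth category), and analyzing the locally admissible part $(\pi \oplus_\tau \iota(\fJ))^{\textnormal{ladm}}$. Using the fact that $\mathfrak{Rep}^{\textnormal{ladm}}(H)$ is a Serre subcategory of $\mathfrak{Rep}^\infty(H)$ closed under arbitrary direct sums and under extensions (a standard property for the groups $H \in \{T,G\}$), one obtains a splitting of $\iota(\fJ) \hookrightarrow (\pi \oplus_\tau \iota(\fJ))^{\textnormal{ladm}}$ from injectivity of $\fJ$ in $\mathfrak{Rep}^{\textnormal{ladm}}(H)$; the delicate remaining step, which I expect to be the heart of the proof, is to propagate this splitting to all of $\pi \oplus_\tau \iota(\fJ)$, equivalently to show that any locally admissible retract of an essentially locally admissible smooth representation extends to a retract of the whole.
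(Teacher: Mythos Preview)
Your proposal has a genuine gap: the ``delicate remaining step'' you flag at the end is not a technicality but the entire content of the lemma, and you give no indication of how to carry it out. Concretely, after splitting $\iota(\fJ) \hookrightarrow (\pi \oplus_\tau \iota(\fJ))^{\textnormal{ladm}}$, you are left with extending the retraction along the inclusion $(\pi \oplus_\tau \iota(\fJ))^{\textnormal{ladm}} \hookrightarrow \pi \oplus_\tau \iota(\fJ)$, which is exactly the original extension problem in disguise (a map from a subrepresentation of a smooth representation into $\iota(\fJ)$). The Zorn/pushout reformulation has not reduced the difficulty. More fundamentally, nothing in your argument uses that $H$ is $T$ or $\textnormal{SL}_2(\bbQ_p)$; if the abstract argument worked, it would prove the lemma for every $p$-adic reductive group, which is not known and is not expected to follow from the Serre-subcategory properties alone.

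The paper's proof, following Pa\v{s}k\=unas, proceeds quite differently and uses group-specific input in an essential way. One reduces (via the projective generators $\cind_{T_0}^T(\sigma)$, resp.\ $\cind_{\textnormal{SL}_2(\bbZ_p)}^{\textnormal{SL}_2(\bbQ_p)}(\sigma)$) to extending maps from subrepresentations $N$ of such compact inductions. The crucial structural facts are: for $T$, every proper quotient of $\cind_{T_0}^T(\mathbf{1}) \cong C[X^{\pm 1}]$ is finite-dimensional (PID structure); for $G$, the quotients $\cind_{\textnormal{SL}_2(\bbZ_p)}^{\textnormal{SL}_2(\bbQ_p)}(\sigma)/(\cT - \lambda)$ are admissible of finite length, and every $\cH$-stable submodule of $(\cind_{\textnormal{SL}_2(\bbZ_p)}^{\textnormal{SL}_2(\bbQ_p)}(\sigma))^{I_1}$ has finite codimension (the analog of Barthel--Livn\'e, Prop.~18). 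These facts force ``locally admissible'' to coincide with ``locally finite'' for these two groups, and this is what allows one to propagate the extension. Your write-up needs to incorporate this structure theory; the categorical skeleton you set up is fine as scaffolding, but it cannot stand on its own.
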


\begin{proof}
The proofs are virtually identical to the proof of \cite[Prop. 5.16]{paskunas:montrealfunctor}.  Since we do not fix the central character, we briefly highlight the main differences, and explain why the arguments work in our setting.  Once again, the assumption in \textit{op. cit.} that the coefficient field $k$ is finite is not necessary for our purposes.

Suppose first that $H = T$.  We outline the main changes to the proof in \emph{loc. cit.} (using that article's notation):
\begin{itemize}
\item We replace $\cind_{KZ}^G$ with $\cind_{T_0}^T$ in the proof, where $T_0 \cong \bbZ_p^\times$ denotes the maximal compact subgroup of $T$.  
\item Since $\bbF_p \subset C$, we may take the representation $\sigma$ to be a character; further, by twisting, we may assume this character is trivial.  
\item Instead of \cite[Cor. 3.8]{emertonpaskunas}, we use \cite[Thm. A.8]{koziol:functorial}.
\item The reference to \cite[Prop. 18]{barthellivne} is no longer applicable; however, in this case, we still have a non-injective $T$-equivariant surjection $\psi:\cind_{T_0}^T(\mathbf{1}_{T_0}) \cong C[X^{\pm 1}] \longtwoheadrightarrow A$.  Since this map is equivariant for the action of $C[T/T_0] \cong C[X^{\pm 1}]$ on both sides, the structure theorem for finitely generated modules over a PID implies that $A \cong C[X^{\pm 1}]/(f(X))$, where $f(X)$ is some nonzero polynomial.  In particular, $A$ is finite-dimensional and admissible.  
\item We may still appeal to \cite[Thm. 2.3.8]{emerton:ord1} to obtain the equivalence of the notions of ``locally admissible'' and ``locally finite'' representations.
\end{itemize}
The other parts of the proof apply with minor changes to give the desired result.

Suppose now that $H = G = \textnormal{SL}_2(\bbQ_p)$.  Since the center $Z$ is a finite group of order 2, and since $p \geq 5$, the action of $Z$ on any smooth representation is semisimple.  In particular, we may assume without loss of generality that $Z$ acts by a character on $\fJ$.  The proof contained in \cite{paskunas:montrealfunctor} may again be adapted to the group $\textnormal{SL}_2(\bbQ_p)$, with the following changes:
\begin{itemize}
\item We replace $\cind_{KZ}^G$ with $\cind_{\textnormal{SL}_2(\bbZ_p)}^{\textnormal{SL}_2(\bbQ_p)}$ in the proof.  
\item Instead of \cite[Cor. 3.8]{emertonpaskunas}, we use \cite[\S A.2, pf. of Step 1]{koziol:functorial}.
\item The analog of \cite[Prop. 18]{barthellivne} holds for the group $\textnormal{SL}_2(\bbQ_p)$ (see the remark below).
\item Let $\cT \in \End_{\textnormal{SL}_2(\bbQ_p)}(\cind_{\textnormal{SL}_2(\bbZ_p)}^{\textnormal{SL}_2(\bbQ_p)}(\sigma))$ denote the spherical Hecke operator associated to the $\textnormal{SL}_2(\bbZ_p)$-bi-equivariant function with support $\textnormal{SL}_2(\bbZ_p) \sm{p}{0}{0}{p^{-1}} \textnormal{SL}_2(\bbZ_p)$, so that the relevant spherical Hecke algebra is a polynomial algebra in $\cT$.  To see that $\cind_{\textnormal{SL}_2(\bbZ_p)}^{\textnormal{SL}_2(\bbQ_p)}(\sigma)/ (\cT - \lambda)$ is of finite length, we appeal to \cite[Thm. 3.18, Prop. 2.7, Cor. 3.26(3), pf. of Prop. 4.4]{abdellatif}.  Further, each representation $\cind_{\textnormal{SL}_2(\bbZ_p)}^{\textnormal{SL}_2(\bbQ_p)}(\sigma)/ (\cT - \lambda)$ is admissible by \cite[Prop. 2.9, Cor. 3.26(3), Cor. 4.6, Prop. 4.7]{abdellatif}.  Therefore, the representation $A$ appearing in \cite[Pf. of Prop. 5.16]{paskunas:montrealfunctor} is admissible.
\item In order to prove the equivalence of ``locally admissible'' and ``locally finite,'' we need an analog of \cite[Thm. 2.3.8]{emerton:ord1} for the group $\textnormal{SL}_2(\bbQ_p)$ (more precisely, we need a version of the equivalence $(1) \Longleftrightarrow (2)$ of \textit{loc. cit.}, Lemma 2.3.6).  The validity of this claim follows from the claim that the quotients $\cind_{\textnormal{SL}_2(\bbZ_p)}^{\textnormal{SL}_2(\bbQ_p)}(\sigma)/ (\cT - \lambda)$ are of finite length, proved in the previous bullet point.  (Of independent interest: in order to verify the equivalence $(1) \Longleftrightarrow (3)$ of \cite[Lem, 2.3.6]{emerton:ord1} for $\textnormal{SL}_2(\bbQ_p)$, one can use \cite[Thm. 3.36(3)]{abdellatif}.)
\end{itemize}
The other parts of the proof apply with minor changes to give the desired result.  
\end{proof}

\begin{rmk}
We believe the counterexample contained in \cite[\S 3.7.3]{abdellatif} is incorrect\footnote{We have discussed this with Abdellatif, and she has agreed with our assessment.}.  In particular, in the notation of \emph{op. cit.}, the Hecke algebra $\cH(G_S,I_S,\omega_{(p - 1)/2})$ contains functions supported on double cosets of the form $I_S\sm{0}{-p^{-\ell}}{p^\ell}{0}I_S$.  Therefore, there are more Hecke operators contained in $\cH(G_S, I_S, \omega_{(p - 1)/2})$ than simply the span of the $T_{2n, 2n + 1}$.  Moreover, the action of the Hecke operator $T_{\sm{0}{1}{-1}{0}}$ on the function $f_1$ is given by $f_1 \cdot T_{\sm{0}{1}{-1}{0}} = (-1)^{(p - 1)/2}f_{-1}$, which implies that the submodule $\bigoplus_{n \geq 1} \overline{\bbF}_p f_n$ is not Hecke-stable.

After checking details, we believe that the analog of \cite[Prop. 18]{barthellivne} for the group $\textnormal{SL}_2(\bbQ_p)$ is indeed true: any $\cH$-stable submodule $W \subset (\cind_{\textnormal{SL}_2(\bbZ_p)}^{\textnormal{SL}_2(\bbQ_p)}(\textnormal{Sym}^r))^{I_1}$ is of finite codimension.  
\end{rmk}

\begin{cor}
\label{paskunascor}
Let $H \in \{T, G\}$, and let $\tau, \pi$ be two locally admissible $H$-representations.  Then
$$\Ext^i_{H,\textnormal{ladm}}(\tau, \pi) \cong \Ext^i_{H}(\tau, \pi).$$
\end{cor}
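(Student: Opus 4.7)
The plan is to derive the corollary as a routine consequence of the preceding lemma by comparing injective resolutions in the two categories.

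First I would recall that $\mathfrak{Rep}^{\textnormal{ladm}}(H)$ is a Grothendieck abelian category (this is already implicit in the discussion of $\ord_{B^-}^G$ as a right adjoint on locally admissible categories), so it has enough injectives. Pick an injective resolution $0 \to \pi \to \fJ^\bullet$ of $\pi$ in $\mathfrak{Rep}^{\textnormal{ladm}}(H)$.

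The key step is to transport this to the smooth category. Since $\iota:\mathfrak{Rep}^{\textnormal{ladm}}(H) \longrightarrow \mathfrak{Rep}^\infty(H)$ is exact (as a fully faithful inclusion of a Serre subcategory), the complex $0 \to \pi \to \iota(\fJ^\bullet)$ is exact in $\mathfrak{Rep}^\infty(H)$. By the previous lemma, each $\iota(\fJ^n)$ is injective in $\mathfrak{Rep}^\infty(H)$, so this is an honest injective resolution of $\pi$ in the smooth category.

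Finally, since $\iota$ is fully faithful and $\tau$ is locally admissible, we have natural isomorphisms
$$\Hom_{H,\textnormal{ladm}}(\tau, \fJ^n) \;\cong\; \Hom_H(\tau, \iota(\fJ^n))$$
for every $n$, compatibly with the differentials. Taking cohomology of the two resulting complexes yields the desired isomorphism $\Ext^i_{H,\textnormal{ladm}}(\tau,\pi) \cong \Ext^i_H(\tau,\pi)$ for all $i \geq 0$.

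The only real content here is the preceding lemma (preservation of injectivity under $\iota$); everything else is a formal diagram-chase, so there is no significant obstacle beyond confirming that the locally admissible category has enough injectives, which follows from standard Grothendieck-category arguments.
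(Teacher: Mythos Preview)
Your proposal is correct and is precisely the standard argument the paper has in mind: the corollary is stated without proof immediately after the lemma on preservation of injectives, and the intended deduction is exactly the one you wrote (take an injective resolution in $\mathfrak{Rep}^{\textnormal{ladm}}(H)$, apply the exact fully faithful inclusion $\iota$, use the lemma to conclude it remains an injective resolution in $\mathfrak{Rep}^\infty(H)$, and identify the Hom-complexes). There is nothing to add.
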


\section{Some Ext calculations}

We begin with some calculations of Ext groups which we will need below.

\subsection{$T$-extensions}\label{sec:T-exts}
Let $\sigma,\kappa$ denote two smooth $T$-representations, and suppose that $\sigma$ is generated by its space of $T_1$-invariant vectors.  (As $T_1$ is normal in $T$, this is equivalent to requiring that $\sigma^{T_1} = \sigma$.)  Since the category of smooth $T$-representations generated by their $T_1$-invariants is equivalent to the category of right $\cH_T$-modules, we get an $E_2$ spectral sequence of $C$-vector spaces
\begin{equation}\label{Tss}
\Ext_{\cH_T}^i(\sigma^{T_1}, \textnormal{H}^j(T_1,\kappa)) \Longrightarrow \Ext_T^{i + j}(\sigma,\kappa).
\end{equation}
Compare \cite[eq. (33)]{paskunas:exts}.

\begin{lemma}\label{T-exts-lemma}
Let $\chi, \chi':T \longrightarrow C^\times$ denote two smooth characters of $T$.  Then
$$\dim_C\big(\Ext_T^{n}(\chi,\chi')\big) = \binom{2}{n}\delta_{\chi,\chi'},$$
where $\delta_{\chi,\chi'}$ denotes the Kronecker delta function.
\end{lemma}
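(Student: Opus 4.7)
The plan is to apply the spectral sequence~\eqref{Tss} with $\sigma = \chi$ and $\kappa = \chi'$, and to compute each of the two inputs explicitly. Note first that $\chi^{T_1} = \chi$, since $T_1$ is pro-$p$ and any smooth homomorphism from a pro-$p$ group into $C^\times$ is trivial when $\textnormal{char}(C) = p$; the same observation gives $\chi'|_{T_1} = 1$.

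For the cohomology input, since $\chi'$ is $T_1$-trivial I would write $\textnormal{H}^j(T_1,\chi') \cong \textnormal{H}^j(T_1,C) \otimes_C \chi'$. Using $T_1 \cong 1 + p\bbZ_p \cong \bbZ_p$ (cohomological dimension $1$), this gives $\chi'$ for $j \in \{0,1\}$ and vanishes otherwise. Because $T$ is abelian, conjugation on $T_1$ is trivial and the $T/T_1$-action on $\textnormal{H}^j(T_1,C)$ is trivial, so $\textnormal{H}^j(T_1,\chi') \cong \chi'$ as $\cH_T$-modules. For the Ext input, I would use the identification $\cH_T \cong C[\bbF_p^\times] \otimes_C C[X^{\pm 1}]$ together with the semisimplicity of $C[\bbF_p^\times]$ (since $p \nmid p-1$). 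Writing $\chi = \psi \otimes \lambda$ with $\psi$ a character of $\bbF_p^\times$ and $\lambda \in C^\times$, the sequence
\[
0 \longrightarrow C[X^{\pm 1}] \otimes_C \psi \xrightarrow{\;X - \lambda\;} C[X^{\pm 1}] \otimes_C \psi \longrightarrow \chi \longrightarrow 0
\]
is a length-one projective resolution of $\chi$ in $\mathfrak{Mod}\text{-}\cH_T$. Applying $\Hom_{\cH_T}(-,\chi')$ yields the zero complex unless the $\bbF_p^\times$-components of $\chi$ and $\chi'$ agree, in which case it becomes $C \xrightarrow{\;\lambda' - \lambda\;} C$. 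Hence $\Ext^i_{\cH_T}(\chi,\chi') \cong \delta_{\chi,\chi'} \cdot C$ for $i \in \{0,1\}$ and vanishes otherwise.

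Plugging both computations into~\eqref{Tss}, the $E_2$ page is supported on $(i,j) \in \{0,1\}^2$, with each nonzero entry isomorphic to $\delta_{\chi,\chi'} \cdot C$. Any differential $d_r : E_r^{i,j} \to E_r^{i+r,\,j-r+1}$ with $r \geq 2$ has either $i + r \geq 2$ or $j - r + 1 < 0$, so its source or target vanishes identically; the sequence therefore degenerates at $E_2$. Summing along antidiagonals then gives $\dim_C \Ext_T^n(\chi,\chi') = \binom{2}{n}\delta_{\chi,\chi'}$. The only point requiring care is the identification of $\textnormal{H}^j(T_1,\chi')$ as an $\cH_T$-module, which is immediate from the abelianness of $T$; once that is in hand, the remainder reduces to the short projective resolution above.
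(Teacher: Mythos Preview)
Your proof is correct and follows essentially the same approach as the paper's: both use the spectral sequence~\eqref{Tss}, the identification $\textnormal{H}^j(T_1,\chi') \cong \chi'$ for $j \in \{0,1\}$, and the computation $\dim_C\Ext^i_{\cH_T}(\chi,\chi') = \binom{1}{i}\delta_{\chi,\chi'}$, with degeneration at $E_2$. The paper simply asserts the two inputs, while you supply the explicit projective resolution and the abelianness argument for the $\cH_T$-module structure; this is additional detail rather than a different route.
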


\begin{proof}
Since $\cH_T$ is of global dimension 1 and $T_1$ has cohomological dimension 1, the spectral sequence \eqref{Tss} degenerates at the $E_2$ page.  The equations
$$\textnormal{H}^1(T_1,\chi') \cong \chi'$$
and
$$\dim_C\big(\Ext_{\cH_T}^{n}(\chi,\chi')\big) = \binom{1}{n}\delta_{\chi,\chi'}$$
then give the result.  
\end{proof}

\begin{lemma}\label{Tfinlen}
Suppose $\sigma, \kappa$ are smooth $T$-representations, and suppose $\sigma$ has finite length.  Then, for $n \geq 3$, we have
$$\Ext_T^n(\sigma,\kappa) = 0.$$
\end{lemma}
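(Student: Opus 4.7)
The plan is to combine two tools: dévissage along the Jordan--Hölder filtration of $\sigma$ to reduce to the case where $\sigma$ is irreducible, and then the spectral sequence \eqref{Tss} to handle the irreducible case. The long exact sequence of $\Ext_T(-,\kappa)$ associated to any short exact sequence $0 \to \sigma' \to \sigma \to \sigma'' \to 0$ shows that the class of $T$-representations $\tau$ satisfying $\Ext_T^n(\tau,\kappa) = 0$ for all $n \geq 3$ is closed under extensions; inducting on the length of $\sigma$, it therefore suffices to treat the case where $\sigma$ is irreducible.

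For irreducible $\sigma$, the first step is to check that $\sigma^{T_1} = \sigma$, so that the spectral sequence \eqref{Tss} applies. I would pick any nonzero $v \in \sigma$; by smoothness, $v$ is fixed by some open subgroup $U \subset T_1$, and the $C$-span of the $T_1$-orbit of $v$ is a finite-dimensional representation of the finite $p$-group $T_1/U$. The standard fact that a finite $p$-group acting on a nonzero vector space over a characteristic-$p$ field has nonzero fixed vectors then produces a nonzero element of $\sigma^{T_1}$. Since $T_1$ is normal in $T$, the subspace $\sigma^{T_1}$ is $T$-stable, and irreducibility of $\sigma$ forces $\sigma^{T_1} = \sigma$.

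With this hypothesis in place, the spectral sequence \eqref{Tss} applies to $\sigma$. Since $\cH_T$ has global dimension $1$ and $T_1$ has cohomological dimension $1$, the $E_2^{i,j}$-term $\Ext_{\cH_T}^i(\sigma^{T_1}, \textnormal{H}^j(T_1,\kappa))$ vanishes unless both $i$ and $j$ lie in $\{0,1\}$. In particular, the abutment $\Ext_T^{i+j}(\sigma,\kappa)$ vanishes for $i+j \geq 3$, completing the irreducible case and, via the dévissage, the general one.

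The only delicate point is the verification of $\sigma^{T_1} = \sigma$ in the irreducible case, where the pro-$p$ structure of $T_1$ and the characteristic assumption on $C$ interact; everything else amounts to invoking the machinery already assembled in Section \ref{sec:T-exts} together with standard long-exact-sequence bookkeeping.
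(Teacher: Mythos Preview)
Your proof is correct and follows essentially the same approach as the paper: reduce to simple $\sigma$ by induction on length, observe that a simple smooth $T$-representation satisfies $\sigma^{T_1}=\sigma$, and then invoke the spectral sequence \eqref{Tss} together with the facts that $\cH_T$ has global dimension $1$ and $T_1$ has cohomological dimension $1$. The paper's proof is simply a terser version of yours, omitting the explicit justification of $\sigma^{T_1}=\sigma$ that you spell out.
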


\begin{proof}
By induction on length, it suffices to assume $\sigma$ is simple.  In particular, $\sigma$ is generated by its $T_1$-invariant vectors.  Since $\cH_T$ is of global dimension 1 and $T_1$ has cohomological dimension 1, the spectral sequence \eqref{Tss} degenerates at the $E_2$ page, which gives the result.
\end{proof}

Now let $\chi:T \longrightarrow C^\times$ be a smooth character, and let $\sigma$ denote a nonsplit extension of $\chi$ by $\chi$:
\begin{equation}\label{nonsplit-ses-sigma}
0 \longrightarrow \chi \longrightarrow \sigma \longrightarrow \chi \longrightarrow 0
\end{equation}

\begin{lemma}\label{extsigmachi}
We have
$$\dim_C\big(\Ext_T^n(\sigma, \chi)\big) = \binom{2}{n}.$$
\end{lemma}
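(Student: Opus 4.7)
The plan is to apply $\Hom_T(-,\chi)$ to the short exact sequence \eqref{nonsplit-ses-sigma} and analyze the resulting long exact sequence of $\Ext_T$-groups. Writing $E^i := \Ext^i_T(\chi,\chi)$, Lemma \ref{T-exts-lemma} gives $\dim_C E^i = \binom{2}{i}$ (so in particular $E^i = 0$ for $i \geq 3$), and Lemma \ref{Tfinlen} applied to $\sigma$ (which has length $2$) ensures $\Ext^n_T(\sigma,\chi) = 0$ for $n \geq 3$. The long exact sequence therefore terminates as
\[
0 \to E^0 \to \Ext^0_T(\sigma,\chi) \to E^0 \xrightarrow{\delta_0} E^1 \to \Ext^1_T(\sigma,\chi) \to E^1 \xrightarrow{\delta_1} E^2 \to \Ext^2_T(\sigma,\chi) \to E^2 \to 0,
\]
where each connecting homomorphism $\delta_i$ is identified, in the standard way, with Yoneda multiplication by the class $[\sigma] \in E^1$ of the extension.

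Since $\sigma$ is nonsplit, $[\sigma] \neq 0$, so $\delta_0$ (given by $1 \mapsto [\sigma]$) is injective, which forces $\dim_C \Ext^0_T(\sigma,\chi) = 1$. Letting $r$ denote the rank of $\delta_1$, a direct count from the long exact sequence yields $\dim_C \Ext^1_T(\sigma,\chi) = 3 - r$ and $\dim_C \Ext^2_T(\sigma,\chi) = 2 - r$. Proving the lemma therefore reduces to the claim that $r = 1$, equivalently that cup product with $[\sigma]$ is a nonzero map $E^1 \to E^2$.

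The main obstacle is this last claim, which I would establish by identifying $\Ext^*_T(\chi,\chi)$ with the smooth group cohomology ring $H^*(T,C)$ and computing the latter via the decomposition $T \cong \bbZ \times \bbF_p^\times \times \bbZ_p$ (using the valuation and the Teichm\"uller lift, valid since $p$ is odd). Since $|\bbF_p^\times| = p - 1$ is prime to $p$, the cohomology of the $\bbF_p^\times$ factor is concentrated in degree $0$, and a K\"unneth argument identifies $H^*(T,C)$ with the exterior algebra $\Lambda_C[u,v]$ on two degree-one generators pulled back from the $\bbZ$- and $\bbZ_p$-factors. Then $E^2 \cong C\cdot(u \cup v)$ is one-dimensional, and the cup product pairing $E^1 \times E^1 \to E^2$ has matrix $\left(\begin{smallmatrix} 0 & 1 \\ -1 & 0 \end{smallmatrix}\right)$ with respect to the basis $(u,v)$. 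This matrix has nonzero determinant, so the pairing is nondegenerate; consequently cup product with any nonzero $[\sigma] \in E^1$ surjects onto the one-dimensional space $E^2$, giving $r = 1$ and the claimed formula $\dim_C \Ext^n_T(\sigma,\chi) = \binom{2}{n}$.
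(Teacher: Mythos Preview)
Your argument is correct, and it takes a somewhat different route from the paper's. Both proofs start by applying $\Hom_T(-,\chi)$ to the short exact sequence \eqref{nonsplit-ses-sigma} and invoking Lemmas \ref{T-exts-lemma} and \ref{Tfinlen}. From there the paper computes $n=0$ as you do, then asserts $n=1$ ``by a direct calculation with Yoneda extensions,'' and finally deduces $n=2$ from the Euler characteristic of the long exact sequence; it also notes an alternative via dualizing and the spectral sequence \eqref{Tss}. You instead handle degrees $1$ and $2$ simultaneously by identifying the connecting maps with Yoneda (cup) product by the class $[\sigma]$ and then invoking the exterior-algebra structure $\Ext^*_T(\chi,\chi)\cong H^*(T,C)\cong\Lambda_C[u,v]$ (via $T\cong\bbZ\times\bbF_p^\times\times\bbZ_p$) to see that cupping with any nonzero degree-one class is nonzero. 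Your approach is more structural and makes the degree-$1$ step transparent rather than leaving it as an unspecified Yoneda computation; the price is that you need the ring structure on $H^*(T,C)$, whereas the paper's Euler-characteristic argument needs only the dimensions.
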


\begin{proof}
The degree $n = 0$ case follows from the fact that $\sigma$ is a nonsplit extension, while the $n = 1$ case follows from a direct calculation with Yoneda extensions.  Applying the functor $\Hom_T(-,\chi)$ to the short exact sequence \eqref{nonsplit-ses-sigma} gives a long exact sequence of Ext groups; taking the Euler characteristic, using Lemmas \ref{T-exts-lemma} and \ref{Tfinlen} and the degrees already computed gives the result in degrees $n \geq 2$.  (Alternatively, one can dualize and use the spectral sequence \eqref{Tss}.)
\end{proof}

\section{Right adjoint calculations}

We are now in a position to calculate $\textnormal{R}^n\cR_B^G$.  The smooth, absolutely irreducible representations of $\textnormal{SL}_2(\bbQ_p)$ are divided into four classes (cf. \cite[Thm. 4]{henniartvigneras:descentp}, \cite[Thms. 3.42, 4.12]{abdellatif}), and we discuss each in turn.

\subsection{Principal series}
This is the most involved calculation.

For this entire subsection we let $\pi = \Ind_B^G(\chi)$, where $\chi:T \longrightarrow C^\times$ is a smooth character.  By \cite[\S 5.4.2]{koziol:functorial}, we have
\begin{equation}
\label{H1PS}
\textnormal{H}^n(I_1, \pi) = \begin{cases} \Ind_{\cH_T}^{\cH}(\chi) & \textnormal{if $n = 0$}, \\ \textnormal{extension of } \Ind_{\cH_T}^\cH(\chi^{-1}\overline{\alpha})^\vee \textnormal{ by } \Ind_{\cH_T}^{\cH}(\chi) & \textnormal{if $n = 1$}, \\  \Ind_{\cH_T}^\cH(\chi^{-1}\overline{\alpha})^\vee & \textnormal{if $n = 2$}, \\ 0 & \textnormal{if $n \geq 3$.} \end{cases}
\end{equation}
The extension in the $\textnormal{H}^1$ term is nonsplit if and only if $\chi = \overline{\rho}$, and for a right $\cH$-module $M$ we equip $M^\vee := \Hom_C(M,C)$ with the structure of a right $\cH$-module as in \cite[\S 4]{abe:involutions}.  We now apply $\cR_{\cH_T}^{\cH}$ to \eqref{H1PS}, and use \cite[Thm. 5.20]{abe:inductions} and \cite[Thm. 4.9]{abe:involutions}.  This gives
\begin{equation}
\label{H1PS-2}
\cR^{\cH}_{\cH_T}(\textnormal{H}^n(I_1, \pi)) = \begin{cases} \chi & \textnormal{if $n = 0$}, \\ \textnormal{extension of } \chi^{-1}\overline{\alpha} \textnormal{ by } \chi & \textnormal{if $n = 1$}, \\  \chi^{-1}\overline{\alpha} & \textnormal{if $n = 2$}, \\ 0 & \textnormal{if $n \geq 3$.} \end{cases}
\end{equation}
Once again, the degree 1 term is nonsplit if and only if $\chi = \overline{\rho}$.  We will use the above as input into the short exact sequence \eqref{deg2}.

Since the unit of the adjunction $(\Ind_B^G, \cR_B^G)$ is an isomorphism \cite[Thm. 5.3]{vigneras:rightadj}, we get
$$\cR_B^G(\pi) = \chi.$$
Suppose next that $n \geq 3$.  Then the short exact sequence \eqref{deg2} and equations \eqref{H1PS-2} imply $(\textnormal{R}^n\cR^G_B(\pi))^{T_1} = 0$, so that
$$\textnormal{R}^n\cR^G_B(\pi) = 0 \qquad \textnormal{for all $n\geq 3$.}$$

We now calculate the remaining two degrees.

\begin{lemma}\label{lemma-R2=0}
We have $\textnormal{R}^2\cR_B^G(\pi) = 0$.  
\end{lemma}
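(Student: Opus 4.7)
The plan is to reduce the lemma to showing the vanishing of $T_1$-invariants, and then close the argument using the Ext spectral sequence comparison.

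Since $\textnormal{R}^2\cR_B^G(\pi)$ is admissible by the preceding lemma, and any nonzero smooth $T$-representation has nonzero $T_1$-invariants (the group $T_1$ is pro-$p$ and $C$ has characteristic $p$), it suffices to show $(\textnormal{R}^2\cR_B^G(\pi))^{T_1} = 0$. The short exact sequence \eqref{deg2} for $n = 2$, together with \eqref{H1PS-2}, shows that $(\textnormal{R}^2\cR_B^G(\pi))^{T_1}$ is a quotient of the one-dimensional character $\chi^{-1}\overline{\alpha}$; the sequence \eqref{deg2} for $n = 3$, together with the vanishing $\textnormal{H}^3(I_1, \pi) = 0$ from \eqref{H1PS}, shows in addition that $\textnormal{H}^1(T_1, \textnormal{R}^2\cR_B^G(\pi)) = 0$. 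A structural analysis of admissible smooth $T_1 \cong \bbZ_p$-representations in characteristic $p$ then shows that $\textnormal{R}^2\cR_B^G(\pi)$ is either zero or an injective hull of $\chi^{-1}\overline{\alpha}$ in the category of locally admissible $T$-representations (a Pr\"ufer-type module with socle $\chi^{-1}\overline{\alpha}$ and vanishing $T_1$-coinvariants).

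To rule out the injective-hull case, I would compute the Ext groups $\Ext_G^n(\Ind_B^G(\sigma), \Ind_B^G(\chi))$ using the spectral sequence \eqref{ord3}, the Emerton--Hauseux formulas $\textnormal{H}^0\ord_{B^-}^G(\pi) = \chi$ and $\textnormal{H}^1\ord_{B^-}^G(\pi) = \chi^{-1}\overline{\alpha}$ (vanishing thereafter), and Lemma \ref{T-exts-lemma}. Feeding these explicit abutments into the spectral sequence \eqref{ord1} applied with $\sigma = \chi^{-1}\overline{\alpha}$, and using $\textnormal{R}^0\cR_B^G(\pi) = \chi$, $\textnormal{R}^j\cR_B^G(\pi) = 0$ for $j \geq 3$, as well as Lemma \ref{Tfinlen}, one obtains dimension equations for the $E_2$-entries $\Ext_T^i(\chi^{-1}\overline{\alpha}, \textnormal{R}^j\cR_B^G(\pi))$ for $j = 1, 2$ and for the $d_2$ differential between them. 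The vanishing of $\Ext_G^n$ for $n \geq 4$ provides crucial upper bounds; in particular it forces $\Ext_T^2(\chi^{-1}\overline{\alpha}, \textnormal{R}^2\cR_B^G(\pi)) = 0$, which is incompatible with $\textnormal{R}^2\cR_B^G(\pi)$ being any nonzero finite-length extension in the $\chi^{-1}\overline{\alpha}$-isotypic block.

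The main obstacle I anticipate is that the two surviving possibilities for $\textnormal{R}^2\cR_B^G(\pi)$ (zero versus injective hull) are not immediately separated by the Ext dimensions alone, since the injective hull $V_\infty$ also satisfies $\Ext_T^2(\chi^{-1}\overline{\alpha}, V_\infty) = 0$; eliminating the Pr\"ufer scenario appears to require exploiting an additional structural input, for instance running \eqref{ord1} with a non-character $\sigma$, or leveraging a specific injective presentation of $\pi$ (such as a Bruhat-type filtration) that controls the ``depth'' of $\textnormal{R}^1\cR_B^G(\pi)$. The degenerate case $\chi^2 = \overline{\alpha}$ --- in particular $\chi = \overline{\rho}$, where the extension in \eqref{H1PS}--\eqref{H1PS-2} is nonsplit and several relevant characters coincide --- will require separate bookkeeping, since various of the Ext-dimension tables change.
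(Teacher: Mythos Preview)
Your reduction is exactly the paper's: the dimension bounds from \eqref{deg2} and the module theory over $C[\![T_1]\!]$ force $\textnormal{R}^2\cR_B^G(\pi)|_{T_1}$ to be $0$ or $\cC^\infty(T_1,C)$, and in the latter case the socle filtration has every graded piece $\chi^{-1}\overline{\alpha}$. The gap is precisely the one you flag in your final paragraph: you do not actually eliminate the injective-hull scenario, and your attempt with $\sigma = \chi^{-1}\overline{\alpha}$ cannot work, as you yourself note. (Your paragraph~2 is a detour: the conclusion ``incompatible with any nonzero finite-length extension'' is irrelevant, since Step~1 has already ruled those out.)

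The paper closes the argument by doing exactly what you suggest but do not execute: it runs \eqref{ord1} with a \emph{non-character} $\sigma$, namely the second step $\sigma^2$ of the socle filtration of $\kappa := \textnormal{R}^2\cR_B^G(\pi)$, a nonsplit self-extension of $\chi^{-1}\overline{\alpha}$. Because $\kappa|_{T_1} \cong \cC^\infty(T_1,C)$ is $T_1$-injective, the spectral sequence \eqref{Tss} collapses and gives $\dim_C \Ext_T^i(\chi^{-1}\overline{\alpha},\kappa) = \binom{1}{i}$; an Euler-characteristic count then yields $\dim_C \Ext_T^1(\sigma,\kappa) = 2$. On the other hand, the spectral sequence \eqref{ord3} (which has $E_2^{i,j}=0$ for $i\ge 3$ or $j\ge 2$) gives
\[
\Ext_G^3(\Ind_B^G(\sigma),\pi)\;\cong\;\Ext_T^2(\sigma,\chi^{-1}\overline{\alpha}),
\]
which is $1$-dimensional by Lemma~\ref{extsigmachi}. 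Finally, the spectral sequence \eqref{ord1} (with $E_2^{i,j}=0$ for $i\ge 3$ or $j\ge 3$) degenerates at $E_3$ and produces a surjection $\Ext_G^3(\Ind_B^G(\sigma),\pi)\twoheadrightarrow \Ext_T^1(\sigma,\kappa)$, forcing $1\ge 2$, the desired contradiction. No separate treatment of $\chi=\overline{\rho}$ is needed in this step: the (non)splitting in \eqref{H1PS-2} affects only the degree-$1$ middle term, not the inputs used here.
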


\begin{proof}
We proceed in several steps.  Note first that the short exact sequence \eqref{deg2} for $n = 2$ and $n = 3$ along with the equations \eqref{H1PS-2} give
$$\dim_C\left(\textnormal{R}^2\cR_B^G(\pi)^{T_1}\right) \leq 1, \qquad \dim_C\left(\textnormal{H}^1(T_1, \textnormal{R}^{2}\cR^G_B(\pi))\right) = 0.$$

\noindent \textit{Step 1. The $T_1$-representation $\textnormal{R}^{2}\cR^G_B(\pi)|_{T_1}$ is either $0$ or isomorphic to $\cC^\infty(T_1,C)$.}

This essentially follows from the dimension calculations above.  We elaborate.  Since $\textnormal{R}^2\cR_B^G(\pi)$ is an admissible $T$-representation, the Pontryagin dual $\textnormal{R}^2\cR_B^G(\pi)^\vee := \textnormal{Hom}_C(\textnormal{R}^2\cR_B^G(\pi),C)$ is a finitely generated module over the DVR $C[\![ T_1 ]\!] \cong C[\![ X ]\!]$.  Furthermore, the cosocle of $\textnormal{R}^2\cR_B^G(\pi)^\vee$ is dual to the space of invariants $\textnormal{R}^2\cR_B^G(\pi)^{T_1}$.  By Nakayama's lemma, $\textnormal{R}^2\cR_B^G(\pi)^\vee$ is generated by at most one element, which implies
$$\textnormal{R}^2\cR_B^G(\pi)^\vee \cong \qquad 0 \qquad \textnormal{or} \qquad C[\![ X ]\!] \qquad \textnormal{or} \qquad C[\![ X ]\!]/X^r$$
for some $r \geq 1$.  Dualizing, we obtain
$$\textnormal{R}^2\cR_B^G(\pi)|_{T_1} \cong \qquad  0 \qquad \textnormal{or} \qquad \cC^\infty(T_1, C) \qquad \textnormal{or} \qquad (C[\![ X ]\!]/X^r)^\vee.$$
However, the condition $\textnormal{H}^1(T_1, \textnormal{R}^{2}\cR^G_B(\pi)) = 0$ rules out the last possibility.

\vspace{10pt}

\noindent \textit{Step 2. We calculate the graded pieces of the socle filtration of $\textnormal{R}^{2}\cR^G_B(\pi)$.}

When $\textnormal{R}^{2}\cR^G_B(\pi) = 0$, there is nothing to prove, so assume the contrary.  For the sake of brevity, we set $\kappa := \textnormal{R}^{2}\cR^G_B(\pi)$, and for $i\geq 0$, let $\sigma^i := \textnormal{soc}^i_T(\kappa)$ denote the socle filtration of $\kappa$.

First, we have 
$$\sigma^1 = \textnormal{soc}_T(\kappa) \subset \kappa^{T_1} \cong \chi^{-1}\overline{\alpha}$$ 
(by \eqref{deg2} in degree $n = 2$, assuming $\kappa \neq 0$).  Thus, $\kappa^{T_1}$ is semisimple as a $T$-representation, so that $\kappa^{T_1} \subset \soc_T(\kappa)$, and this implies $\sigma^1 \cong \chi^{-1}\overline{\alpha}$.  Similarly, we have 
$$\sigma^2/\sigma^1 = \textnormal{soc}_T(\kappa/\sigma^1) = \textnormal{soc}_T(\kappa/\chi^{-1}\overline{\alpha}) \subset (\kappa/\chi^{-1}\overline{\alpha})^{T_1}.$$
To determine the latter space (which is nonzero by Step 1), we apply the functor of $T_1$-invariants to the short exact sequence
$$0 \longrightarrow \chi^{-1}\overline{\alpha} \longrightarrow \kappa \longrightarrow \kappa/\chi^{-1}\overline{\alpha} \longrightarrow 0$$
to get
$$0 \longrightarrow \chi^{-1}\overline{\alpha} \stackrel{\sim}{\longrightarrow} \kappa^{T_1} \stackrel{0}{\longrightarrow} (\kappa/\chi^{-1}\overline{\alpha})^{T_1} \longrightarrow \textnormal{H}^1(T_1,\chi^{-1}\overline{\alpha}) \cong \chi^{-1}\overline{\alpha}.$$
Thus, we obtain $(\kappa/\chi^{-1}\overline{\alpha})^{T_1} \cong \chi^{-1}\overline{\alpha}$, and consequently $\sigma^2/\sigma^1 \cong \chi^{-1}\overline{\alpha}$.  Continuing in this way, we see that $\dim_C(\sigma^i) = i$ and $\sigma^{i + 1}/\sigma^i \cong \chi^{-1}\overline{\alpha}$ for all $i \geq 0$.

\vspace{10pt}

\noindent \textit{Step 3. We have $\textnormal{R}^2\cR_B^G(\pi) = 0$.}

Assume the contrary, so that $\textnormal{R}^2\cR_B^G(\pi)|_{T_1} \cong \cC^\infty(T_1,C)$.  As in Step 2, we have $\textnormal{R}^2\cR_B^G(\pi)^{T_1} \cong \chi^{-1}\overline{\alpha}$.  By injectivity of $\cC^\infty(T_1,C)$, the spectral sequence \eqref{Tss} for $\kappa = \textnormal{R}^2\cR_B^G(\pi)$ collapses to give
\begin{equation}\label{R2dims}
\dim_C\big(\Ext_T^i(\chi^{-1}\overline{\alpha}, \textnormal{R}^2\cR_B^G(\pi))\big) = \dim_C\big(\Ext_{\cH_T}^i(\chi^{-1}\overline{\alpha}, \chi^{-1}\overline{\alpha})\big) = \binom{1}{i}.
\end{equation}

Let $\sigma = \sigma^2$ denote the second step in the socle filtration of $\textnormal{R}^2\cR_B^G(\pi)$, as in Step 2 above, and note that by construction we have $\dim_C(\Hom_T(\sigma,\textnormal{R}^2\cR_B^G(\pi))) = 2$.  Applying $\Hom_T(-,\textnormal{R}^2\cR_B^G(\pi))$ to the short exact sequence 
$$0 \longrightarrow \chi^{-1}\overline{\alpha} \longrightarrow \sigma \longrightarrow \chi^{-1}\overline{\alpha} \longrightarrow 0$$
gives a long exact sequence of Ext groups.  Taking the Euler characteristic and using \eqref{R2dims} implies
\begin{equation}\label{dimsigmaR2}
\dim_C\big(\Ext_T^1(\sigma,\textnormal{R}^2\cR_B^G(\pi))\big) = \dim_C\big(\Hom_T(\sigma,\textnormal{R}^2\cR_B^G(\pi))\big) = 2.
\end{equation}

We first examine the spectral sequence \eqref{ord3} for $\sigma$ and $\pi$ as above.  By Lemma \ref{Tfinlen} and the fact that $\textnormal{H}^j\ord_{B^-}^G(\pi) = 0$ for $j \geq 2$ (\cite[Prop. 3.6.1]{emerton:ord2}), we have $E_2^{i,j} = 0$ for $i \geq 3$ or $j \geq 2$.  This implies 
$$\Ext^3_G(\Ind_B^G(\sigma), \pi) \cong E_\infty^{2,1} = E_2^{2,1} = \Ext_T^2(\sigma,\textnormal{H}^1\ord_{B^-}^G(\pi)) \cong \Ext_T^2(\sigma,\chi^{-1}\overline{\alpha}),$$
where the last isomorphism follows from \cite[Cor. 3.3.8(ii)]{hauseux}.  Therefore, we get
\begin{equation}\label{Ext3dim=1}
\dim_C\big(\Ext^3_G(\Ind_B^G(\sigma),\pi)\big) = 1 
\end{equation}
by Lemma \ref{extsigmachi}.

Consider now the spectral sequence \eqref{ord1}.  By Lemma \ref{Tfinlen} and the $\textnormal{R}^n\cR_B^G$ already calculated, we have $E_2^{i,j} = 0$ if $i \geq 3$ or $j \geq 3$, which implies the spectral sequence degenerates at the $E_3$ page.  In particular, we get a surjection
$$\Ext_G^3(\Ind_B^G(\sigma),\pi) \longtwoheadrightarrow \Ext_T^1(\sigma, \textnormal{R}^2\cR_B^G(\pi)).$$
By equations \eqref{dimsigmaR2} and \eqref{Ext3dim=1}, the left-hand side has dimension 1 while the right-hand side has dimension 2, and we arrive at a contradiction.
\end{proof}

It remains to calculate $\textnormal{R}^1\cR_B^G(\pi)$.  We proceed as follows.  Using the values of $\textnormal{R}^n\cR_B^G(\pi)$ already computed, the spectral sequence \eqref{ord1} yields the following exact sequence:
\begin{equation}
\label{7term1}
\begin{tikzcd}
0\ar[r]  & \Ext_T^1(\sigma, \cR_B^G(\pi)) \ar[r, "e_1"] & \Ext_G^1(\Ind_B^G(\sigma), \pi) \ar[r] \arrow[d, phantom, ""{coordinate, name=Z}]   & \Hom_T(\sigma, \textnormal{R}^1\cR_B^G(\pi)) \arrow[dll, rounded corners=8pt, to path= { -- ([xshift=3ex]\tikztostart.east) |- (Z) [near end]\tikztonodes -| ([xshift=-3ex]\tikztotarget.west) -- (\tikztotarget)}] & \\
 & \Ext^2_T(\sigma, \cR_B^G(\pi)) \ar[r, "e_2"] & \Ext^2_G(\Ind_B^G(\sigma), \pi) \ar[r]   & \Ext^1_T(\sigma, \textnormal{R}^1\cR_B^G(\pi)) \ar[r] & 0 
\end{tikzcd}
\end{equation}
Here, the maps $e_i$ are the edge maps of the spectral sequence \eqref{ord1}.  Similarly, using the fact that $\textnormal{H}^j\ord_{B^-}^G(\pi) = 0$ for $j \geq 2$ (\cite[Prop. 3.6.1]{emerton:ord2}) and that $\pi$ is locally admissible, the spectral sequence \eqref{ord3} for $\sigma$ locally admissible gives
\begin{equation}
\label{7term2}
\begin{tikzcd}
0\ar[r]  & \Ext_T^1(\sigma, \ord_{B^-}^G(\pi)) \ar[r, "e_1^{\textnormal{ladm}}"] & \Ext_G^1(\Ind_B^G(\sigma), \pi) \ar[r] \arrow[d, phantom, ""{coordinate, name=Z}]   & \Hom_T(\sigma, \textnormal{H}^1\ord_{B^-}^G(\pi)) \arrow[dll, rounded corners=8pt, to path= { -- ([xshift=3ex]\tikztostart.east) |- (Z) [near end]\tikztonodes -| ([xshift=-3ex]\tikztotarget.west) -- (\tikztotarget)}] & \\
 & \Ext^2_T(\sigma, \ord_{B^-}^G(\pi)) \ar[r, "e_2^{\textnormal{ladm}}"] & \Ext^2_G(\Ind_B^G(\sigma), \pi) \ar[r]   & \Ext^1_T(\sigma, \textnormal{H}^1\ord_{B^-}^G(\pi)) \ar[r] & 0 
\end{tikzcd}
\end{equation}

Let us now fix a locally admissible $T$-representation $\sigma$, and recall that $\pi = \Ind_B^G(\chi)$.  Consider the composite map
$$\Ext_T^i(\sigma,\chi) \longrightarrow \Ext_T^i\big(\sigma,\cR_B^G(\Ind_B^G(\chi))\big) \stackrel{e_i}{\longrightarrow} \Ext_G^i(\Ind_B^G(\sigma), \Ind_B^G(\chi)),$$
where the first map is induced by the unit $\chi \longrightarrow \cR_B^G(\Ind_B^G(\chi))$ of the adjunction $(\Ind_B^G, \cR_B^G)$.  A straightforward exercise in homological algebra shows that this composite map is, up to a sign, equal to the map obtained by applying the exact functor $\Ind_B^G$ to a Yoneda extension.  A similar remark holds for the adjunction $(\Ind_B^G, \ord_{B^-}^G)$ defined on the locally admissible categories.

Suppose further that $\sigma$ has finite length; this implies that all Ext spaces appearing in the exact sequences \eqref{7term1} and \eqref{7term2} have finite dimension.  By \cite[Thm. 5.3]{vigneras:rightadj} and \cite[Prop. 4.3.4]{emerton:ord1}, the units of both the adjunctions $(\Ind_B^G, \cR_B^G)$ and $(\Ind_B^G, \ord_{B^-}^G)$ are isomorphisms, which implies that the domain of $e_2$ is identified with the domain of $e_2^{\textnormal{ladm}}$ (both of which are isomorphic to $\Ext^2_T(\sigma,\chi)$).  Therefore, the paragraph above implies that the image of the edge map $e_2$ in the exact sequence \eqref{7term1} has the same dimension as the image of the edge map $e_2^{\textnormal{ladm}}$ in the exact sequence \eqref{7term2}.  In particular, by dimension counting this implies 
$$\dim_C\big(\Ext^1_T(\sigma, \textnormal{R}^1\cR_B^G(\pi))\big) = \dim_C\big(\Ext^1_T(\sigma, \textnormal{H}^1\ord_{B^-}^G(\pi))\big);$$
the two exact sequences \eqref{7term1} and \eqref{7term2} then give
\begin{equation}
\label{dimhomR1}
\dim_C\big(\Hom_T(\sigma, \textnormal{R}^1\cR_B^G(\pi))\big) = \dim_C\big(\Hom_T(\sigma, \textnormal{H}^1\ord_{B^-}^G(\pi))\big) = \dim_C\big(\Hom_T(\sigma, \chi^{-1}\overline{\alpha})\big),
\end{equation}
where the last equality follows from \cite[Cor. 3.3.8(ii)]{hauseux}.

\begin{lemma}
We have $\textnormal{R}^1\cR_B^G(\pi) = \chi^{-1}\overline{\alpha}$.  
\end{lemma}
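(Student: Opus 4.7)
The plan is to pin down $\kappa := \textnormal{R}^1\cR_B^G(\pi)$ by first determining its $T_1$-invariants and $T_1$-cohomology, then using the dimension equality \eqref{dimhomR1} to kill any higher socle filtration layers.  From \eqref{deg2} with $n = 1$, combined with $\cR_B^G(\pi) = \chi$ and the identification $\textnormal{H}^1(T_1,\chi) \cong \chi$ (which holds since the smooth character $\chi$ is trivial on the pro-$p$ group $T_1$ and $T$ is abelian, so $T/T_1$ acts trivially on $\textnormal{H}^1(T_1,C)$), one reads off $\kappa^{T_1} \cong \chi^{-1}\overline{\alpha}$.  Similarly, feeding $\textnormal{R}^2\cR_B^G(\pi) = 0$ (Lemma \ref{lemma-R2=0}) and $\cR_{\cH_T}^\cH(\textnormal{H}^2(I_1, \pi)) = \chi^{-1}\overline{\alpha}$ into \eqref{deg2} with $n = 2$ yields $\textnormal{H}^1(T_1, \kappa) \cong \chi^{-1}\overline{\alpha}$.

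Every smooth irreducible $T$-representation is $T_1$-trivial, hence $\textnormal{soc}_T(\kappa) \subseteq \kappa^{T_1}$; together with the simplicity of $\kappa^{T_1}$, this forces $\textnormal{soc}_T(\kappa) = \chi^{-1}\overline{\alpha}$.  It then suffices to show that this inclusion is an equality.  Suppose for contradiction that $\kappa \supsetneq \chi^{-1}\overline{\alpha}$.  The long exact sequence of $T_1$-cohomology applied to
$$0 \longrightarrow \chi^{-1}\overline{\alpha} \longrightarrow \kappa \longrightarrow \kappa/\chi^{-1}\overline{\alpha} \longrightarrow 0,$$
together with the facts that $\kappa^{T_1}$ is already accounted for by $\chi^{-1}\overline{\alpha}$ and that $\textnormal{H}^1(T_1, \chi^{-1}\overline{\alpha}) \cong \textnormal{H}^1(T_1, \kappa) \cong \chi^{-1}\overline{\alpha}$, shows that $(\kappa/\chi^{-1}\overline{\alpha})^{T_1}$ embeds into $\chi^{-1}\overline{\alpha}$.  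Since $\kappa/\chi^{-1}\overline{\alpha}$ is a nonzero smooth $T$-representation and the pro-$p$ group $T_1$ has nonzero invariants on any such representation, one concludes $(\kappa/\chi^{-1}\overline{\alpha})^{T_1} \cong \chi^{-1}\overline{\alpha}$.  Extracting the second step $\textnormal{soc}_T^2(\kappa)$ of the socle filtration then yields a subrepresentation which is a length-two extension of $\chi^{-1}\overline{\alpha}$ by itself with one-dimensional socle, hence is forced to be the unique nonsplit self-extension; call it $\tau$.

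To finish, I would contradict the existence of this embedding $\tau \hookrightarrow \kappa$.  By Lemma \ref{extsigmachi} (with $\chi$ there replaced by $\chi^{-1}\overline{\alpha}$), we have $\dim_C \Hom_T(\tau, \chi^{-1}\overline{\alpha}) = 1$, so the dimension equality \eqref{dimhomR1} (applied with $\sigma = \tau$) gives $\dim_C \Hom_T(\tau, \kappa) = 1$.  However, the embedding $\tau \hookrightarrow \kappa$ is injective, while the composite $\tau \twoheadrightarrow \chi^{-1}\overline{\alpha} \cong \textnormal{soc}_T(\kappa) \hookrightarrow \kappa$ is not, yielding two linearly independent elements of $\Hom_T(\tau, \kappa)$ and the desired contradiction.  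The main obstacle is this final step: one must identify a test object rigid enough that $\Hom_T(-,\kappa)$ into it forces dimension one, yet the hypothetical second socle layer visibly produces two linearly independent maps.
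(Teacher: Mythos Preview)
Your argument is correct and follows essentially the same route as the paper: both isolate the second socle step $\tau = \textnormal{soc}_T^2(\kappa)$ as a test object and derive a contradiction from \eqref{dimhomR1} by exhibiting $\dim_C \Hom_T(\tau,\kappa) \geq 2$ while $\dim_C \Hom_T(\tau,\chi^{-1}\overline{\alpha}) = 1$. Two minor remarks: your computation of $\textnormal{H}^1(T_1,\kappa)$ is never used, and the phrase ``the unique nonsplit self-extension'' is inaccurate since $\dim_C \Ext_T^1(\chi^{-1}\overline{\alpha},\chi^{-1}\overline{\alpha}) = 2$ by Lemma \ref{T-exts-lemma}; fortunately Lemma \ref{extsigmachi} applies to any nonsplit self-extension, so the argument goes through unchanged.
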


\begin{proof}
Taking $\sigma = \chi^{-1}\overline{\alpha}$ in \eqref{dimhomR1} shows that $\chi^{-1}\overline{\alpha} \longhookrightarrow \textnormal{R}^1\cR_B^G(\pi)$.  Now, assume by contradiction that $\dim_C(\textnormal{R}^1\cR_B^G(\pi)) \geq 2$, and let $\sigma$ denote the second step of the socle filtration of $\textnormal{R}^1\cR_B^G(\pi)$ (as in Step 2 of the proof of Lemma \ref{lemma-R2=0}).  Then $\sigma$ is a nonsplit extension of $\chi^{-1}\overline{\alpha}$ by itself, and by construction, we have
$$\dim_C\big(\Hom_T(\sigma,\textnormal{R}^1\cR_B^G(\pi))\big) = 2,\qquad \dim_C\big(\Hom_T(\sigma,\chi^{-1}\overline{\alpha})\big) = 1.$$
However, this contradicts equation \eqref{dimhomR1}.  
\end{proof}

Putting everything together gives
\begin{equation}
\label{result-ps}
\boxed{\textnormal{R}^n\cR_B^G(\Ind_B^G(\chi)) = \begin{cases} \chi & \textnormal{if $n = 0$}, \\ \chi^{-1}\overline{\alpha} & \textnormal{if $n = 1$}, \\ 0 & \textnormal{if $n \geq 2$}. \end{cases}}
\end{equation}

\subsection{Steinberg}
Suppose now that $\pi = \textnormal{St} := \Ind_B^G(\mathbf{1}_T)/\mathbf{1}_G$ is the Steinberg representation.  By \cite[\S 5.4.3]{koziol:functorial}, we have
$$\textnormal{H}^n(I_1,\textnormal{St}) = \begin{cases} \chi_{\sign} & \textnormal{if $n = 0$,} \\ \Ind_{\cH_T}^{\cH}(\mathbf{1}_T) & \textnormal{if $n = 1$,} \\ \chi_{\triv} & \textnormal{if $n = 2$,} \\ 0 & \textnormal{if $n \geq 3$.} \end{cases}$$
(For the definitions of $\chi_{\sign}$ and $\chi_{\triv}$, see \cite[Rmks. 2.23 and 2.24(1)]{olliviervigneras}.)  Thus, by \cite[Thm. 5.20]{abe:inductions}, we have
$$\cR_{\cH_T}^{\cH}(\textnormal{H}^n(I_1,\textnormal{St})) = \begin{cases} \mathbf{1}_T & \textnormal{if $n = 0, 1$,}  \\  0 & \textnormal{if $n \geq 2$.} \end{cases}$$

By \cite[Cor. 6.5]{AHV}, we have 
$$\cR_B^G(\textnormal{St}) = \mathbf{1}_T.$$ 
Using this fact, the calculation of $\cR_{\cH_T}^{\cH}(\textnormal{H}^1(I_1,\textnormal{St}))$ above, and the short exact sequence \eqref{deg2} for $n = 1$, we get
$$0 \longrightarrow \textnormal{H}^1(T_1, \mathbf{1}_T) \cong \mathbf{1}_T \longrightarrow \mathbf{1}_T \longrightarrow (\textnormal{R}^1\cR^G_B(\textnormal{St}))^{T_1} \longrightarrow 0.$$
Therefore, we have $(\textnormal{R}^1\cR^G_B(\pi))^{T_1} = 0$, which implies
$$\textnormal{R}^1\cR^G_B(\textnormal{St}) = 0.$$
Finally, using the short exact sequence \eqref{deg2} for $n \geq 2$ and the calculation of $\cR_{\cH_T}^{\cH}(\textnormal{H}^n(I_1,\textnormal{St}))$ shows that $(\textnormal{R}^n\cR^G_B(\pi))^{T_1} = 0$.  Putting everything together gives
$$\boxed{\textnormal{R}^n\cR^G_B(\textnormal{St}) = \begin{cases} \mathbf{1}_T & \textnormal{if $n = 0$}, \\ 0 & \textnormal{if $n \geq 1$}. \end{cases}}$$

\subsection{Trivial representation}
Suppose next that $\pi = \mathbf{1}_G$ is the trivial $G$-representation.  By \cite[Cor. 6.5]{AHV}, we have 
$$\cR_B^G(\mathbf{1}_G) = 0.$$  
To compute higher derived functors, we use the short exact sequence
$$0 \longrightarrow \mathbf{1}_G \longrightarrow \Ind_B^G(\mathbf{1}_T) \longrightarrow \textnormal{St} \longrightarrow 0.$$
Applying the left-exact functor $\cR_B^G$ to the above gives an exact sequence
$$0 \longrightarrow \cR_B^G(\mathbf{1}_G) = 0 \longrightarrow \cR_B^G(\Ind_B^G(\mathbf{1}_T)) \cong \mathbf{1}_T \longrightarrow \cR_B^G(\textnormal{St}) \cong \mathbf{1}_T \longrightarrow 0.$$
Since $\textnormal{R}^n\cR_B^G(\textnormal{St}) = 0$ for $n \geq 1$, the long exact sequence for higher derived functors implies $\textnormal{R}^n\cR_B^G(\mathbf{1}_G) \cong \textnormal{R}^n\cR_B^G(\Ind_B^G(\mathbf{1}_T))$ for all $n \geq 1$.  Thus, using equation \eqref{result-ps}, we conclude
$$\boxed{\textnormal{R}^n\cR_B^G(\mathbf{1}_G) = \begin{cases} 0 & \textnormal{if $n = 0$}, \\ \overline{\alpha} & \textnormal{if $n = 1$}, \\ 0 & \textnormal{if $n \geq 2$}. \end{cases}}$$

\subsection{Supersingular representations}
Finally, suppose that $\pi$ is an absolutely irreducible supersingular $G$-representation.  Then the $\cH$-modules $\textnormal{H}^n(I_1, \pi)$ are supersingular for all $n \geq 0$ (when $C$ is finite, one can use \cite[\S 5.4.4]{koziol:functorial}; otherwise, see \cite[Cor. 8.12]{ollivierschneider:pro-p-SL2}).  Consequently, by \cite[Thm. 5.20]{abe:inductions}, we have $\cR_{\cH_T}^{\cH}(\textnormal{H}^n(I_1,\pi)) = 0$ for all $n \geq 0$, and equations \eqref{deg1} and \eqref{deg2} imply $(\textnormal{R}^n\cR^G_B(\pi))^{T_1} = 0$ for all $n \geq 0$.  Thus, we conclude
$$\boxed{\textnormal{R}^n\cR^G_B(\pi) = 0 \qquad \textnormal{for all $n\geq 0$.}}$$

\bibliographystyle{amsalpha}
\bibliography{refs}

\end{document}